\pgfplotsset{compat=1.17}
\newtheorem {theorem}{Theorem}
\newtheorem {corollary}{Corollary}
\newtheorem {example}{Example}
\newtheorem {proposition}{Proposition}
\newtheorem {remark}{Remark}
\newenvironment {proof}[1][Proof]{\noindent \textbf {#1.} }{\ \rule {0.5em}{0.5em}}
\let\@fnsymbol\@arabic
\begin{document}

\title{A Note on the Stability of Monotone Markov Chains }

\author{Bar Light\protect\thanks{Business School and Institute of Operations Research and Analytics, National University of Singapore, Singapore. e-mail: \textsf{barlight@nus.edu.sg} }}  
\maketitle

\thispagestyle{empty}

 \noindent \noindent \textsc{Abstract}:
\begin{quote}
	This note studies monotone Markov chains, a subclass of Markov chains with extensive applications in operations research and economics. While the properties that ensure the global stability of these chains are well studied, their establishment often relies on the fulfillment of a certain splitting condition. We address the challenges of verifying the splitting condition by introducing simple, applicable conditions that ensure global stability. The simplicity of these conditions is demonstrated through various examples including autoregressive processes, portfolio allocation problems and resource allocation dynamics.

\end{quote}

\noindent {\small Keywords: }Markov chains;  Monotone Markov chains; Global stability. 


\newpage 

\section{Introduction }

Monotone Markov chains are an important class of Markov chains with widespread applications across fields such as operations research, economics, and the broader realm of applied probability. Their uses span a variety of areas, including queueing systems, savings dynamics, autoregressive processes, and inventory management, to name just a few. The properties of these chains have been extensively studied. This has led to the development of a rich literature that provides various conditions under which these chains exhibit a unique stationary distribution and global stability (see, for example, \cite{dubins1966invariant}, \cite{bhattacharya1988asymptotics}, \cite{hopenhayn1992stochastic}, \cite{lund1996geometric}, \cite{bhattacharya2007random}, \cite{bhattacharya2010limit}, and \cite{kamihigashi2014stochastic}). 

The establishment of these properties often hinges on the fulfillment of a splitting condition (see Section 3.5 in \cite{bhattacharya2007random} and Proposition \ref{Prop:MonotoneMark} below). This condition, particularly elegant in the context of infinite state spaces, is often easier to verify than classical irreducibility conditions. In fact, it can sometimes hold in scenarios where irreducibility falls short, as noted in  \cite{bhattacharya1988asymptotics} (see also Example \ref{Examp:AR(1)}). However, despite the theoretical appeal of the splitting condition, verifying its applicability in practical scenarios can be challenging. This note seeks to bridge this gap by introducing a set of easily applicable conditions that imply  global stability. These conditions are satisfied, for instance, when the Markov chain under consideration exhibits specific concavity and contraction properties, which are typically straightforward to verify in applications. 
To demonstrate the applicability of these conditions, we present various examples from existing literature, including AR(1) processes, RCA(1) processes, resource allocation dynamics, and portfolio allocation problems.

\section{Main Results}
 Let $S = I_{1} 
\times \ldots \times I_{n}  \subseteq 
\mathbb{R}^{n}$ where every $I_{i}$ is a nonempty open, half-open, closed, or half-closed interval
in $\mathbb{R}$. We denote by $\mathcal{B}(S)$ the Borel sigma algebra on $S$ and by $\mathcal{P}(S)$ the set of all probability measures on $S$ endowed with the weak topology.

 We consider the Markov chain $\{X_{k}\}_{k=1}^{\infty}$ on $S$ given by
\begin{equation*}
X_{k+1} = w(X_{k},V_{k+1})
    \end{equation*}
where $w: S \times E \rightarrow S $ is a measurable function and $V_{k}$'s are independent and identically distributed random variables on some polish space $E$,  endowed with a partial order $\geq_{E}$. 

In this note we focus on monotone Markov chains. We endow $\mathbb{R}^{n}$ with the standard product partial order $\geq$, i.e., $x \geq y$ for $x,y \in \mathbb{R}^{n}$ if $x_{i} \geq y_{i}$ for $i=1,\ldots,n$ and $ x > y$ if $x \geq y$, $x \neq y$. For $x,y \in \mathbb{R}^{n}$ we denote by $\max \{x,y \}$ and $\min \{x,y \}$ the pointwise maximum and pointwise minimum of $x$ and $y$, respectively. A function $f$ from $S \subseteq \mathbb{R}^{n}$ to $\mathbb{R}^{n}$ is called increasing if $x \geq y$ implies $f(x) \geq f(y)$ for all $x,y \in S$ and strictly increasing if $x > y$ implies $f(x) > f(y)$ for all $x,y \in S$. We denote by $[x'',x'] = \{x \in \mathbb{R}^{n} : x'' \leq x \leq x'\}$ an order interval in $\mathbb{R}^{n}$ and by $\mathbb{R}_{+}^{n} =  \{x \in \mathbb{R}^{n} : x \geq 0 \}$ the non-negative part of $\mathbb{R}^{n}$.

For the rest of the note, we assume that $w: S \times E \rightarrow S $ is increasing, i.e., $w(x',v') \geq w(x'',v'')$ whenever $x' \geq x''$ and $v' \geq_{E} v''$.

We denote by $\mu_{k}^{x}$ the law of $X_{k}$ when the initial state is $X_{1}=x$, and by $Q(x,B) = \mathbb{P} (v \in E: w(x,v) \in B)$ for any $B \in \mathcal{B}(S)$ the Markov transition kernel   

The probability measure $\pi$ on $S$ is called a stationary distribution of $Q$ if $M \pi = \pi$ where $M\mu (B) = \int _{X} Q(x,B) \mu(dx)$ for any $B \in \mathcal{B}(S)$.  
A sequence of probability measures $\{\mu_{k}\}$ on $S$ is called tight if for all $\epsilon > 0$ there is a compact subset $K_{\epsilon}$ of $S$ such that $\mu_{k}(S \setminus K_{\epsilon}) \leq \epsilon$ for all $k$. Tightness is a standard assumption in order to ensure the existence of a stationary distribution (see \cite{meyn2012markov} for an extensive study of stationary distributions).
The Markov kernel $Q$ is called globally stable if $Q$ has a unique stationary distribution $\pi$ and $M^{t}\mu$ converges to $\pi$ for every $\mu \in \mathcal{P}(S)$.

We now present the main theorem of this note. We provide conditions that are typically simple to verify on the function $w$ that imply the global stability of $Q$. We illustrate the usefulness of these conditions after we provide the proof of Theorem \ref{Thm:Main}.  

We will say that $(v',v'')$ is an ordered normal pair if  $v',v'' \in E$, $v' >_{E} v''$, $\mathbb{P}(v \in E: v \geq_{E} v' ) > 0$, $\mathbb{P}(v \in E: v ''   \geq_{E} v ) > 0$, $w(x,v') > w(x,v'')$ for all $x \in S$, and $w(x,v')$, $w(x,v'')$ are continuous in $x$. For a given $v \in E$ a fixed point of $w(\cdot,v)$ is an element $x \in S$ such that $w(x,v)=x$.

\begin{theorem} \label{Thm:Main}
    If the sequence $\{\mu_{k}^{x}\}_{k=1}^{\infty}$ is tight for any $x \in S$ and $(v',v'')$ is an ordered normal pair such that the following two conditions are satisfied:

(i) The function $w(\cdot,v'')$ or the function $w(\cdot,v')$ has a unique fixed point.

(ii) For any $x \in S$, there exist $y'',y' \in S$ such that \begin{equation*}
     y' \geq \max \{w(y',v'),x\} \text{ and } y'' \leq \min \{w(y'',v''),x\}.
     \end{equation*}

Then $Q$ is globally stable. 
\end{theorem}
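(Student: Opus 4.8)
The plan is to verify the splitting condition of Proposition~\ref{Prop:MonotoneMark} and then invoke it, since that proposition already converts splitting, together with the assumed tightness of $\{\mu_{k}^{x}\}$, into global stability. Assume without loss of generality that it is $w(\cdot,v')$ that possesses the unique fixed point $x^{*}$ in condition (i); the other case is symmetric, exchanging $v'$ and $v''$ and reversing all inequalities. Write $f'=w(\cdot,v')$ and $f''=w(\cdot,v'')$; both are increasing and continuous on $S$, and $f'(x)>f''(x)$ for every $x$.

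The deterministic heart of the argument is a monotone-iteration analysis. Fix $x\in S$ and take $y'',y'$ as in condition (ii), so that $y''\le x\le y'$ with $f'(y')\le y'$ and $f''(y'')\ge y''$; since $f'\ge f''$ pointwise we also get $f'(y'')\ge y''$ and $f''(y')\le y'$, so $y''$ is a subsolution and $y'$ a supersolution for \emph{both} maps. By monotonicity the sequences $(f')^{m}(y')$ and $(f'')^{m}(y')$ are nonincreasing while $(f')^{m}(y'')$ and $(f'')^{m}(y'')$ are nondecreasing, all trapped in the order interval $[y'',y']$; hence each converges in $\mathbb{R}^{n}$, and by continuity the limits are fixed points of the respective map. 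Uniqueness in (i) then forces $(f')^{m}(y')\downarrow x^{*}$ and $(f')^{m}(y'')\uparrow x^{*}$, and squeezing gives $(f')^{m}(x)\to x^{*}$ for \emph{every} $x$. For $f''$ the two monotone limits need not coincide, but the one from above, $\overline{a}:=\lim_{m}(f'')^{m}(y')$, is a fixed point of $f''$ with $\overline{a}\le x^{*}$, and $\overline{a}\ne x^{*}$ because $f''(x^{*})<f'(x^{*})=x^{*}$ prevents $x^{*}$ from being fixed by $f''$. Using continuity and the strict separation $f'>f''$, I would upgrade this to the coordinatewise gap $\overline{a}\ll x^{*}$, which is precisely what creates room for a splitting point.

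With this in hand I would verify splitting. Using tightness, fix $\epsilon<1$, a compact $K_{\epsilon}$ capturing at least $1-\epsilon$ of every $\mu_{k}^{x}$, enclose $K_{\epsilon}$ in an order interval $[\underline{b},\overline{b}]$, and choose $x_{0}$ with $\overline{a}\ll x_{0}\ll x^{*}$. For the ``up'' half, consider the event $\{V_{1}\ge_{E}v',\dots,V_{N}\ge_{E}v'\}$, of probability $\mathbb{P}(v\ge_{E}v')^{N}>0$ by independence; on it the monotonicity of $w$ gives $X_{N}^{x}\ge (f')^{N}(x)\ge (f')^{N}(\underline{b})$ for every $x\ge\underline{b}$, and since $(f')^{N}(\underline{b})\uparrow x^{*}\gg x_{0}$, for $N$ large every such trajectory lies above $x_{0}$. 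Symmetrically, on $\{V_{1}\le_{E}v'',\dots,V_{N}\le_{E}v''\}$, of probability $\mathbb{P}(v''\ge_{E}v)^{N}>0$, one gets $X_{N}^{x}\le (f'')^{N}(\overline{b})\downarrow\overline{a}\ll x_{0}$, so every trajectory started below $\overline{b}$ lies below $x_{0}$. Taking $N$ large enough to serve both sides yields the two splitting inequalities at the common point $x_{0}$ with a common $\chi>0$, which is exactly the hypothesis of Proposition~\ref{Prop:MonotoneMark}.

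The step I expect to be most delicate is making the finite horizon $N$ and the splitting point $x_{0}$ work uniformly over a \emph{noncompact} state space. The convergences $(f')^{N}(\underline{b})\to x^{*}$ and $(f'')^{N}(\overline{b})\to\overline{a}$ are only pointwise in $N$, so the reduction to a compact localizing interval via tightness, combined with the coordinatewise strict gap $\overline{a}\ll x_{0}\ll x^{*}$, is what permits a single $N$ and $\chi$. Securing that strict gap --- rather than the merely non-strict order separation in the definition of an ordered normal pair --- is the crux, and is where the strictness of $w(\cdot,v')>w(\cdot,v'')$ together with the continuity of $w(x,\cdot)$ must be exploited.
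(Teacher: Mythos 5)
Your proposal follows the same route as the paper's proof: reduce to the order--reversal condition of Proposition \ref{Prop:MonotoneMark}, iterate the deterministic maps $w(\cdot,v')$ and $w(\cdot,v'')$ from the sub/supersolutions supplied by condition (ii), compare the limiting fixed points using condition (i) and $w(\cdot,v')>w(\cdot,v'')$, and then realize these deterministic trajectories with positive probability through $N$-fold runs of shocks above $v'$ (resp.\ below $v''$). Two of your deviations are worth noting. Your fixed-point comparison (dominate the $w(\cdot,v'')$-iterates by the $w(\cdot,v')$-iterates, then observe $x^{*}$ cannot be fixed by $w(\cdot,v'')$) is cleaner than the paper's appeal to Topkis's theorem (Proposition \ref{Prop: topkis}). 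On the other hand, your tightness/compact-enclosure step is unnecessary: Proposition \ref{Prop:MonotoneMark} allows the horizon $m$ to depend on the pair $(x',x'')$, so no uniformity over the state space is needed; the paper instead handles arbitrary starting pairs with condition (ii) plus stochastic monotonicity of $\mu_{m}$.

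The genuine gap is exactly the step you defer: the upgrade from $\overline{a}<x^{*}$ to the coordinatewise gap $\overline{a}\ll x^{*}$. This does not follow from the hypotheses and is false in general, because in this paper $w(x,v')>w(x,v'')$ means only $\geq$ and $\neq$ in the product order, so entire coordinate functions of $w(\cdot,v')$ and $w(\cdot,v'')$ may coincide. Concretely, take $S=\mathbb{R}^{2}$, $E=\mathbb{R}$, $V_{k}$ uniform on $[-1,1]$, $w((x_{1},x_{2}),v)=(x_{1}/2,\; x_{2}/2+v)$, $v'=1/2$, $v''=-1/2$: every hypothesis of Theorem \ref{Thm:Main} holds (this is an instance of Example \ref{Examp:AR(1)}), yet $x^{*}=(0,1)$ and $\overline{a}=(0,-1)$ agree in the first coordinate, so no $x_{0}$ with $\overline{a}\ll x_{0}\ll x^{*}$ exists; continuity of $w$ in $v$ cannot help. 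Worse, for $x'=(1,0)\geq x''=(0,0)$ the first coordinates evolve deterministically with $X_{1,m}'=2^{-(m-1)}>0=X_{1,m}''$ for every $m$, so $\mathbb{P}(X_{m}'\leq X_{m}'')=0$ for all $m$: the hypothesis of Proposition \ref{Prop:MonotoneMark} fails outright, and no choice of splitting point or horizon can repair the argument (global stability nevertheless holds in this example, so what fails is the proof route, not the theorem's conclusion). To be fair, the paper's own proof makes the same leap: from $C>C^{*}$ it infers $z_{k}>x^{*}$ and $y_{k}<x^{*}$ eventually with $x^{*}=(C+C^{*})/2$, which fails in the example above ($y_{k,1}=2^{-(k-1)}>0=x^{*}_{1}$ for all $k$). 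So your proposal faithfully reproduces the paper's argument, weak point included; both arguments are complete only when the two limiting fixed points are separated strictly in every coordinate, e.g., when $n=1$ or when the pair satisfies the stronger condition $w(x,v')\gg w(x,v'')$.
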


To prove the theorem, we will need the following two propositions.
Recall that a partially ordered set $(Z , \leq )$ is said to be a lattice if for all $x ,y \in Z$, $\sup \{x ,y\}$ and $\inf \{y ,x\}$ exist in $Z$. The partially ordered set $(Z , \leq )$ is a complete lattice if for all non-empty subsets $Z^{ \prime } \subseteq Z$ the elements $\sup Z^{ \prime }$ and $\inf Z^{ \prime }$ exist in $Z$. Every order interval  $[x'',x'] := \{x \in S : x'' \leq x \leq x'\}$ in $\mathbb{R}^{n}$ is a complete lattice (see Section 2.3 and Theorem 2.3.1 in \cite{topkis2011supermodularity}).

We will use the following Proposition regarding the comparison of fixed points. For a proof, see Corollary 2.5.2 in \cite{topkis2011supermodularity}.

\begin{proposition} \label{Prop: topkis}
\label{Topkis Fixed point}Suppose that $Z$ is a nonempty complete lattice, $E$ is a partially ordered set, and $f (z ,e)$ is an increasing function from $Z \times E$ into $Z$. Then the greatest and least fixed points 
of $f (z ,e)$ exist and are increasing in $e$ on $E$. 
\end{proposition}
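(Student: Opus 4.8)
The plan is to split the proposition into two classical pieces: an existence statement of Knaster--Tarski type for each fixed $e$, and a short comparative-statics argument that delivers monotonicity in $e$. First I would fix $e \in E$ and regard $f(\cdot,e)$ as an order-preserving self-map of the nonempty complete lattice $Z$ (order-preserving because $f$ is increasing in its first argument). Following the standard Tarski construction, I would define the candidate greatest and least fixed points as
\[
\bar{z}(e) = \sup\{z \in Z : z \leq f(z,e)\}, \qquad \underline{z}(e) = \inf\{z \in Z : z \geq f(z,e)\}.
\]
Both defining sets are nonempty (the first contains $\inf Z$, the second contains $\sup Z$), and the supremum and infimum exist because $Z$ is a complete lattice, so $\bar z(e)$ and $\underline z(e)$ are well defined.

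The heart of the existence step is verifying that $\bar z(e)$ is actually a fixed point, and indeed the greatest one. For every $z$ with $z \leq f(z,e)$ one has $z \leq \bar z(e)$, hence $f(z,e) \leq f(\bar z(e),e)$ and therefore $z \leq f(\bar z(e),e)$; so $f(\bar z(e),e)$ is an upper bound of the defining set and $\bar z(e) \leq f(\bar z(e),e)$. Monotonicity then gives $f(\bar z(e),e) \leq f(f(\bar z(e),e),e)$, so $f(\bar z(e),e)$ itself lies in the defining set and is $\leq \bar z(e)$; combining the two inequalities yields $f(\bar z(e),e) = \bar z(e)$. Any fixed point $z^{*}$ satisfies $z^{*}\leq f(z^{*},e)$, hence $z^{*}\leq \bar z(e)$, so $\bar z(e)$ is the greatest fixed point. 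The symmetric argument gives that $\underline z(e)$ is the least fixed point.

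Second, I would establish monotonicity in $e$ using only the characterizations above together with the fact that $f$ is increasing in its second argument. Fix $e' \geq_{E} e''$. For the greatest fixed points, $\bar z(e'') = f(\bar z(e''),e'') \leq f(\bar z(e''),e')$, so $\bar z(e'')$ belongs to $\{z : z \leq f(z,e')\}$ and thus $\bar z(e'') \leq \sup\{z : z \leq f(z,e')\} = \bar z(e')$. For the least fixed points, $\underline z(e') = f(\underline z(e'),e') \geq f(\underline z(e'),e'')$, so $\underline z(e')$ belongs to $\{z : z \geq f(z,e'')\}$ and hence $\underline z(e'') = \inf\{z : z \geq f(z,e'')\} \leq \underline z(e')$. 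Both maps are therefore increasing on $E$.

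I expect the main obstacle to be the existence step rather than the comparative statics: one must argue carefully that the supremum of the set of pre-fixed points is itself fixed, which is exactly where completeness of the lattice and monotonicity of $f(\cdot,e)$ are both essential. Once that Knaster--Tarski fact is in hand, the monotonicity in $e$ is a two-line consequence. Since the result is Corollary 2.5.2 in \cite{topkis2011supermodularity}, one could alternatively cite it directly, but I would present the self-contained construction because the lattice here is an order interval in $\mathbb{R}^{n}$, which makes the supremum/infimum characterizations concrete.
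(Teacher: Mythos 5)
Your proof is correct. Note that the paper does not actually prove this proposition; it cites Corollary 2.5.2 of \cite{topkis2011supermodularity}, so your self-contained argument supplies exactly what that citation outsources, and it follows the same route the cited source takes: the existence half is the standard Knaster--Tarski construction (the supremum of the pre-fixed-point set $\{z : z \leq f(z,e)\}$, nonempty since it contains $\inf Z$, is shown to be the greatest fixed point using only completeness of $Z$ and monotonicity in $z$), and the comparative-statics half is the usual observation that $\bar z(e'') \leq f(\bar z(e''),e')$ makes $\bar z(e'')$ a pre-fixed point of $f(\cdot,e')$, hence $\bar z(e'') \leq \bar z(e')$, with the dual argument for the least fixed points. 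All steps check out, so citing Topkis or presenting your construction are interchangeable here.
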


We will use Proposition \ref{Prop:MonotoneMark} to provide sufficient conditions that imply the stability of monotone Markov chains. The key condition for global stability, as provided in \cite{kamihigashi2014stochastic}, is a weaker version of the splitting condition discussed in \cite{bhattacharya1988asymptotics}. The splitting condition requires that for any \(x \in S\), there exists a \( c^* \in S\) and an integer \(m\) such that \(\mathbb{P}(X_{m} \leq c^*) > 0\) and \(\mathbb{P}(X_{m} \geq c^*) > 0\) for the Markov chain \(\{X_{k}\}_{k=1}^{\infty}\) with law \(\mu_{k}^{x}\). For global stability results with convergence rates that use the splitting condition, see \cite{bhattacharya2010limit}. For a proof of the next proposition and a comparison with the splitting condition, see Theorem 2 and the subsequent discussion in \cite{kamihigashi2014stochastic}.

\begin{proposition} \label{Prop:MonotoneMark}
Assume that for any $x',x'' \in S$ with $x ' \geq x''$ we have $\mathbb{P}(X_{m} ' \leq X_{m}'') > 0 $ for some integer $m$ where  $\{X_{k}'\} _{k=1}^{\infty}$ and $\{X_{k}''\}_{k=1}^{\infty}$ are independent Markov chains and $X_{k}'$ has a law $\mu_{k}^{x'}$ and $X_{k}''$ has a law $\mu_{k}^{x''}$ (so $X_{k}'$ and $X_{k}''$ are monotone Markov chains).  

     Then $Q$ is globally stable if and only if the sequence $\{\mu_{k}^{x}\}_{k=1}^{\infty}$ is tight for any $x \in S$. 
\end{proposition}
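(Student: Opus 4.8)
The plan is to prove the two directions separately, with the forward implication being routine and the reverse implication carrying all the weight. For the direction ``global stability $\Rightarrow$ tightness,'' I would fix $x \in S$ and observe that global stability gives $\mu_{k}^{x} = M^{k-1}\delta_{x} \to \pi$ weakly. Since $S$ is Polish (a finite product of intervals is completely metrizable and separable), a weakly convergent sequence of probability measures together with its limit is relatively compact in the weak topology, hence tight by Prokhorov's theorem; this yields tightness of $\{\mu_{k}^{x}\}_{k}$ for every $x$.

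For the converse, assume tightness for every $x$ together with the crossing hypothesis, and aim to establish a unique stationary distribution $\pi$ with $M^{t}\mu \to \pi$ for all $\mu$. First I would produce a stationary distribution: tightness of $\{\mu_{k}^{x}\}_{k}$ makes the Ces\`aro averages $T^{-1}\sum_{k=1}^{T}\mu_{k}^{x}$ tight, so they admit a weakly convergent subsequence whose limit is stationary by a Krylov--Bogolyubov-type argument, the monotone structure compensating for the absence of an assumed Feller property. Second, I would record the basic monotonicity of the dynamics in the stochastic-dominance order $\le_{\mathrm{st}}$ (defined by $\mu \le_{\mathrm{st}} \nu$ iff $\int f\,d\mu \le \int f\,d\nu$ for every bounded increasing $f$): because each $w(\cdot,v)$ is increasing, a synchronous coupling that feeds the common innovation sequence $\{V_{k}\}$ to two chains started at $x' \ge x''$ preserves the order pathwise, so $\mu_{t}^{x''} \le_{\mathrm{st}} \mu_{t}^{x'}$ for all $t$.

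The heart of the proof is to show that this stochastic-order gap between $\mu_{t}^{x'}$ and $\mu_{t}^{x''}$ vanishes as $t \to \infty$, and here the crossing hypothesis enters. For a bounded increasing $f$ the synchronous coupling gives $0 \le \int f\,d\mu_{t}^{x'} - \int f\,d\mu_{t}^{x''}$. On the other hand, running the two chains with \emph{independent} innovations for $m$ steps achieves $X_{m}' \le X_{m}''$ with probability $p>0$, after which I would switch to a synchronous coupling that keeps $X_{t}' \le X_{t}''$ for all later $t$, forcing $f(X_{t}') \le f(X_{t}'')$ on that event and bounding the gap by $2(1-p)\|f\|_{\infty}$. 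Iterating this over successive independent blocks via the Markov property, the probability of never having crossed decays geometrically, so the gap tends to $0$; hence $\mu_{t}^{x'}$ and $\mu_{t}^{x''}$ share the same subsequential weak limits. Combining this pairwise merging with the $\le_{\mathrm{st}}$-monotonicity and tightness (which supplies subsequential limits and lets one sandwich an arbitrary initial $\mu$ between point masses) would show that all initial conditions converge to one and the same limit, which must then be the unique stationary distribution $\pi$, giving global stability.

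The main obstacle I anticipate is the block-iteration step: the hypothesis only furnishes a crossing probability that is positive \emph{pointwise} in the pair of current states, with the horizon $m$ possibly depending on that pair, whereas the renewal argument wants a crossing probability bounded away from zero \emph{uniformly} over the states visited. Bridging this gap is exactly where tightness must be used in earnest --- to confine the coupled chains to a compact set on which a uniform positive lower bound for crossing over a common number of steps can be extracted --- and making this confinement-plus-uniformity argument rigorous, rather than the clean geometric picture above, is the delicate part of the proof.
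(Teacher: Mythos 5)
You should know at the outset that the paper itself does not prove this proposition: it is imported from Theorem 2 of \cite{kamihigashi2014stochastic}, and your blind reconstruction follows essentially the same route as that source (your crossing hypothesis is their ``order reversing'' property, and your crossing-then-couple scheme is their ``order mixing'' argument). Your forward direction is fine: on the Polish space $S$ a weakly convergent sequence of laws is tight by Prokhorov's theorem. The problems are in the reverse direction, where both of the steps you treat lightly are genuine gaps. The first is existence of a stationary distribution. Krylov--Bogolyubov requires the Feller property to show that a subsequential limit of the Ces\`aro averages is stationary, and no Feller property is available here: $w$ is only measurable and increasing in $x$ (continuity in $x$ is assumed only for the two particular shocks $v',v''$ of an ordered normal pair, and only as a hypothesis of Theorem 1, not of this proposition). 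The phrase ``the monotone structure compensating for the absence of an assumed Feller property'' names the difficulty without resolving it; existence for monotone, bounded-in-probability kernels without Feller is itself a nontrivial result, and is part of what the cited theorem establishes.

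The second gap is the uniformity issue you flag at the end, and here your proposed repair is the wrong one: confining the two chains to a compact set $K$ cannot by itself yield a crossing probability bounded away from zero, because with no continuity in $x$ there is no mechanism for passing from pointwise positivity to uniform positivity over a compact set. The missing idea is order-theoretic rather than topological. Since $K \subseteq S \subseteq \mathbb{R}^{n}$ is compact, it sits inside an order interval $[a,b]$; apply the hypothesis \emph{once}, to the single ordered pair $(b,a)$, to obtain an integer $m$ and $p>0$ such that independent chains $Y'$ started from $b$ and $Y''$ started from $a$ satisfy $\mathbb{P}(Y_{m}' \leq Y_{m}'') \geq p$; then monotonicity transfers exactly this $m$ and $p$ to \emph{every} pair of current states in $[a,b]$, by driving the chain started at any $u' \in [a,b]$ with the same shocks as the chain from $b$ (so it stays below it pathwise) and the chain started at any $u'' \in [a,b]$ with the same shocks as the chain from $a$ (so it stays above it). With that uniform bound, tightness supplies infinitely many times at which both chains lie in $K$, a conditional Borel--Cantelli argument makes the crossing time almost surely finite, and your merging estimate $\int f\,d\mu_{t}^{x'} - \int f\,d\mu_{t}^{x''} \to 0$ for bounded increasing $f$ then goes through. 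One further small repair: ``sandwiching an arbitrary initial $\mu$ between point masses'' is impossible when $S$ is unbounded; you must instead truncate $\mu$ to an order interval carrying mass $1-\epsilon$ and sandwich only the truncated part.
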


We are now ready to prove Theorem \ref{Thm:Main}. We will show that the condition of Proposition \ref{Prop:MonotoneMark} holds.

\begin{proof}
      Let $x' , x'' \in S$ be such that $x' 
      \geq x''$ and suppose that $(v',v'')$ is an ordered normal pair. We assume $w(\cdot,v'')$ has a unique fixed point (the proof for the case that $w(\cdot,v')$ has a unique fixed point is the same).

      Define the sequences $z_{k +1} = w(z_{k},v')$ and $y_{k +1} = w(y_{k},v'')$ where  $z_{1} = x''$ and $y_{1} =x'$. We show that $z_{k}$ converges to $C$ and $y_{k}$ converges to $C^{*}$ such that $C> C^{*}$ (see Figure \ref{fig:proof_demo}).

First, suppose that $x'' = z_{1} \leq w(z_{1},v') $ and $x' = y_{1} \geq w(y_{1},v'')$,

The sequence $\{z_{k}\}_{k =1}^{\infty }$ is an increasing  sequence. To see this, first note that $z_{2} = w(z_{1},v')  \geq z_{1}$. If $z_{k} \geq z_{k -1}$ for some $k \geq 2$, then $w$ being increasing  implies that $z_{k +1} =w(z_{k},v') \geq   w(z_{k -1},v') =z_{k}$. Hence, by induction, $ \{z_{k}\}_{k =1}^{\infty }$ is an increasing sequence. Similarly, the sequence $\{y_{k}\}_{n =1}^{\infty }$ is a decreasing sequence.

 From condition (ii), there exist  
$y'$ and $y''$ in $S$ such that $y'' \leq x'' \leq x' \leq y'$ and $w(y',v') \leq y'$ and $y'' \leq w(y'',v'')$. Thus, the function $f=w$ defined on $[y'',y'] \times \{v'',v'\}$ and equals $w$, is from $[y'',y'] \times \{v'',v'\} $ into $[y'',y']$ because
$$w(y,v'') \leq w(y,v') \leq w(y',v') \leq y' \text{ and } y'' \leq w(y'',v'') \leq w(y'',v') \leq w(y,v')$$ 
for all $y \in [y'',y']$. Note that the inequalities above imply that the sequences $\{z_{k}\}_{k =1}^{\infty }$ and $\{y_{k}\}_{n =1}^{\infty }$ are bounded.

Thus, the bounded and increasing sequence $ \{z_{k}\}_{k =1}^{\infty }$ converges to a limit $C$ which is a fixed point of $w(\cdot,v')$ by the continuity of $w$. 
 Similarly, the sequence $\{y_{k}\}_{n =1}^{\infty }$ is decreasing and bounded and therefore converges to a limit $C^{ \ast }$ which is a fixed point of $w(\cdot,v'')$.  Note that $C \leq y'$.

We claim that $C > C^{ \ast }$. The function $f$ is increasing in both arguments. Hence, $f$ is an increasing function from a complete lattice to a complete lattice. From Proposition \ref{Prop: topkis}, the greatest and least fixed points of $f$ are increasing on $\{v'',v'\} $. From condition (i), the function $w(\cdot,v'')$ has a unique fixed point $C^{*}$. Hence, the least fixed point of $w(\cdot,v')$ on $[y'',y']$, say $C'$, satisfies $C^{*} \leq C'$. Using the fact that $w(x,v') > w(x,v'')$ for all $x
\in S$ we conclude that $C^{*} < C'$. Because $C \in [y'',y']$ is a fixed point of $w(\cdot,v')$  we have $C' \leq C$ so $C^{*} < C$.

Let $x^{ \ast } =(C +C^{ \ast })/2$ then $x^{*} \in S$. Since $z_{k}\uparrow C$ and $y_{k}\downarrow C^{ \ast }$, there exists a finite $N_{1}$ such that $z_{k} >x^{ \ast }$ for all $k \geq N_{1}$, and similarly, there exists a finite $N_{2}$ such that $y_{k} <x^{ \ast }$ for all $k \geq N_{2}$. Let $m =\max \{N_{1} ,N_{2}\}$. Thus, after $m$ steps, all sequences $\{y_{k}'\}, \{z_{k}'\}$ such that $y_{k}' \leq y_{k}$ and $z_{k}' \geq z_{k}$ for all $k$ satisfy $y_{k}' < x^{*} < z_{k}'$. Because $w$ is increasing, the probability of a sequence $\{z_{k}'\}$ such that $z_{k}' \geq z_{k}$ for $k=1,\ldots,m$ is at least $[ \mathbb{P}(v \in E: v \geq_{E} v' )]^{m} > 0$. Hence, the probability of moving from state $x''$ to the set $\{ x \in S: x \geq x^{ \ast } \}$ after $m$ steps is at least  $[ \mathbb{P}(v \in E: v \geq_{E} v' )]^{m} > 0$, and similarly, the probability of moving from  the state $x'$ to the set $\{ x \in S: x < x^{ \ast } \}$ after $m$ steps is at least $\mathbb{P}(v \in E: v \leq_{E} v'' )]^{m} > 0$.

Thus, we have shown that $\mu_{m}^{x'}(\{x \in S: x < x ^{*} \}) > 0$ and $\mu_{m}^{x''}(\{x \in S: x \geq x ^{*} \}) > 0$ for some $x^{*} \in S$ and some integer $m$ when  $x'' \leq w(x'',v')$ and $x' \geq w(x',v'')$.

Now consider the case that $x'' \nleq w(x'',v')$ and $x' \ngeq w(x',v'')$. 
From condition (ii) we can find $y',y'' \in S$ such that 
$$y'' \leq x'' \leq x' \leq y' \text{ and } y'' \leq w(y'',v') \text{, } y' \geq w(y',v'') .$$

Thus, using the argument above, there exists $y^{*}$ such that there is a positive probability to move from the state $y''$ to the set $\{x \in S: x \geq y ^{*} \}$ and to move from the state $y'$ to the set $\{x \in S: x < y^{*} \}$ after $m$ steps. 

Because $w$ is increasing, $\mu_{m}$ is monotone, i.e., we have $\mu_{m}^{x}(B) \geq \mu_{m}^{y}(B)$ for every upper set $B$ and $x \geq y$ where $B \in \mathcal{B}(S)$ is an upper set if $z \in B$ implies $z' \in B$ for all $z' \geq z$. Thus, 
$$\mu_{m}^{x''}(\{x \in S: x \geq y ^{*} \}) \geq \mu_{m}^{y''}(\{x \in S: x \geq y ^{*} \}) > 0 $$ 
and 
$$\mu_{m}^{x'}(\{x \in S: x < y ^{*} \}) \geq \mu_{m}^{y'}(\{x \in S: x < y ^{*} \}) > 0. $$

We conclude that for any $x'',x' \in S$ such that $x' \geq x''$ we have $\mu_{m}^{x'}(\{x \in S: x < y ^{*} \}) > 0$ and $\mu_{m}^{x''}(\{x \in S: x \geq y ^{*} \}) > 0$ for some $y^{*} \in S$ and some integer $m$. 

Hence, $$\mathbb{P} (X_{m}' \leq X_{m}'') \geq \mathbb{P} (X_{m}' \leq y^{*} ) \mathbb{P} (X_{m}'' > y^{*} ) >0 $$
for some $m$ when   $\{X_{k}'\} _{k=1}^{\infty}$ and $\{X_{k}''\}_{k=1}^{\infty}$ are independent Markov chains and $X_{k}'$ has a law $\mu_{k}^{x'}$ and $X_{k}''$ has a law $\mu_{k}^{x''}$.

From Proposition \ref{Prop:MonotoneMark}, $M$ has a unique stationary distribution and $M^{n}\lambda$ converges to the unique stationary distribution for every $\lambda \in \mathcal{P}(S)$. 
\end{proof}

\begin{figure}[h]
    \centering
    \includegraphics[width=0.7\textwidth]{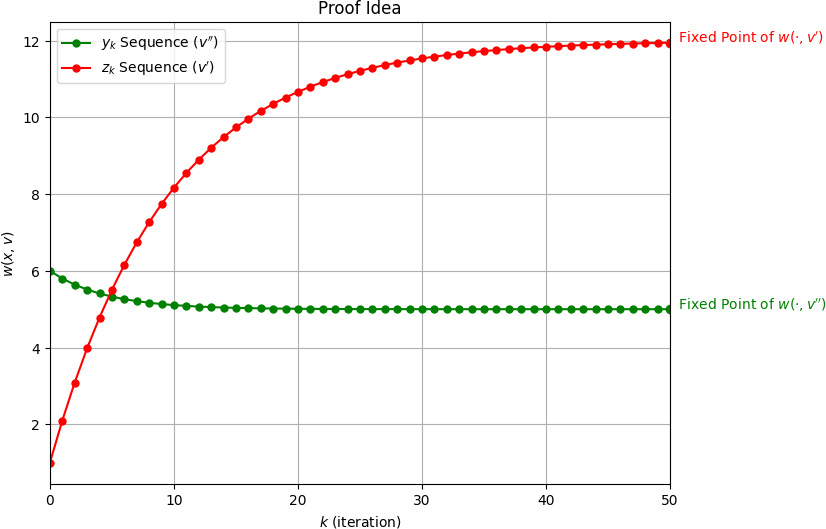}
    \caption{In the proof we construct sequences \(  \{z_{k} \} \), \( \{ y_{k} \} \) such that \( z_{k} \) converges to a fixed point $C$ of \( w(\cdot, v') \) and \( y_{k} \) converges to a fixed point $C^{*}$ of \( w(\cdot, v'') \) and $C > C^{*}$ whenever $y_{1} \geq z_{1}$. We then show that this together with the fact that $(v',v'')$ is an ordered normal pair imply  that the conditions of Proposition 2 hold.}
    \label{fig:proof_demo}
\end{figure}

If $S$ has a greatest and a least element, then condition (ii) of Theorem \ref{Thm:Main} is satisfied. Indeed, we can choose $y'$ to be the greatest element and $y''$  to be the least element in condition (ii). In this case, it is also immediate that the tightness condition  holds. This leads to the following corollary. 

     \begin{corollary} \label{cor:compact}
            Assume that $S$ has a greatest and a least element.
    
    Suppose that $(v',v'')$ is an ordered normal pair and the function $w(\cdot,v'')$ has a unique fixed point.
Then $Q$ is globally stable. 
     \end{corollary}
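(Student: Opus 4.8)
The plan is to verify that every hypothesis of Theorem \ref{Thm:Main} holds under the assumptions of the corollary and then invoke that theorem directly. The corollary already supplies condition (i)---the uniqueness of the fixed point of $w(\cdot,v'')$---together with the assumption that $(v',v'')$ is an ordered normal pair, so only condition (ii) and the tightness of $\{\mu_k^x\}_{k=1}^\infty$ remain to be checked.

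First I would establish condition (ii). Let $\bar{s}$ and $\underline{s}$ denote the greatest and least elements of $S$, respectively. For an arbitrary $x \in S$, I take $y' = \bar{s}$ and $y'' = \underline{s}$. Since $w$ maps $S$ into $S$, the point $w(\bar{s},v')$ lies in $S$ and is therefore dominated by the greatest element, i.e. $w(\bar{s},v') \leq \bar{s}$; combined with $x \leq \bar{s}$ this gives $\bar{s} \geq \max\{w(\bar{s},v'),x\}$. The symmetric argument yields $w(\underline{s},v'') \geq \underline{s}$ and $\underline{s} \leq x$, so $\underline{s} \leq \min\{w(\underline{s},v''),x\}$. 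Hence condition (ii) holds, with witnesses that may in fact be chosen independently of $x$.

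Next I would verify tightness. Because $S = I_1 \times \cdots \times I_n$ possesses a greatest element $\bar{s}$ and a least element $\underline{s}$, each coordinate interval $I_i$ attains both a maximum and a minimum, so $I_i = [\underline{s}_i,\bar{s}_i]$ is closed and bounded. Consequently $S = [\underline{s},\bar{s}]$ is compact, and on a compact space every sequence of probability measures is tight, since one may take $K_\epsilon = S$ for all $\epsilon > 0$. Thus $\{\mu_k^x\}_{k=1}^\infty$ is tight for every $x \in S$.

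With condition (i), condition (ii), the ordered normal pair hypothesis, and tightness all in place, Theorem \ref{Thm:Main} applies and delivers the global stability of $Q$. I do not expect a genuine obstacle here; the only point deserving a moment's attention is that the product-of-intervals structure of $S$ forces compactness as soon as both extreme elements exist, which is precisely what makes tightness automatic rather than a separate assumption to be imposed.
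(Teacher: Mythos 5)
Your proof is correct and follows the same route as the paper: choosing $y'$ and $y''$ to be the greatest and least elements of $S$ for condition (ii), and noting that tightness is automatic since the extreme elements force each coordinate interval to be closed and bounded, hence $S$ compact. You simply spell out the details that the paper leaves as immediate.
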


In many cases, it is possible to identify a  $v''$ such that $w(\cdot,v'')$ possesses a unique fixed point, 
thereby facilitating the application of Theorem \ref{Thm:Main}. For broader results where a unique fixed point is assured, we may invoke contraction properties (see Corollary \ref{Cor:contraction}) or concavity properties (see Corollary \ref{Cor:concave}) of the function  $w$.

Recall that the maximum norm on $\mathbb{R}^{n}$ is given by $\lVert  x \rVert _{\infty} = \max_{i} |x_{i}|$. A function $f$ from $\mathbb{R}^{n}$ to $\mathbb{R}^{n}$ is said to be contraction with respect to the maximum norm if 
 $\lVert f(x) - f(y) \rVert _{\infty} \leq K  \lVert x - y \rVert_{\infty}$ for some $K \in (0,1)$ and all $x,y$. The next Corollary shows a connection between the contraction properties of $w(\cdot,v)$ to  global stability. 
    
\begin{corollary} \label{Cor:contraction}
   Assume that the sequence $\{\mu_{k}^{x}\}_{k=1}^{\infty}$ is tight for any $x \in S$. 
    
    Suppose that $(v',v'')$ is an ordered normal pair. 
    Assume that $S=\mathbb{R}^{n}$ and $w(x,v')$ and $w(x,v'')$ are contractions with respect to the maximum norm.
    Then $Q$ is globally stable. 
\end{corollary}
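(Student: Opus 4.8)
The plan is to deduce the result directly from Theorem \ref{Thm:Main}: the tightness hypothesis and the ordered-normal-pair hypothesis are assumed outright, so it remains only to verify conditions (i) and (ii). Condition (i) is essentially free. Since $S=\R^{n}$ equipped with the maximum norm is a complete metric space and $w(\cdot,v'')$ is a contraction on it, the Banach fixed point theorem furnishes a unique fixed point $c''$ of $w(\cdot,v'')$; the same argument applied to $w(\cdot,v')$ gives its unique fixed point $c'$. This immediately establishes condition (i) (indeed with room to spare, since both maps have unique fixed points).

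The substance of the proof lies in verifying condition (ii), and the idea I would use is that a maximum-norm contraction shrinks a uniform coordinate shift of its fixed point by the contraction factor, leaving a controllable slack in every coordinate. Fix $x\in S$ and let $K\in(0,1)$ be a contraction modulus valid for both $w(\cdot,v')$ and $w(\cdot,v'')$ (e.g.\ the larger of the two). Write $\mathbf{1}=(1,\ldots,1)$. To build the upper point $y'$, I would take an upward uniform shift of $c'$, namely $y'=c'+t\mathbf{1}$ with $t=\max\{0,\max_{i}(x_{i}-c'_{i})\}$, which guarantees $y'\geq x$ and $\lVert y'-c'\rVert_{\infty}=t$. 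The contraction estimate then reads $\lVert w(y',v')-c'\rVert_{\infty}=\lVert w(y',v')-w(c',v')\rVert_{\infty}\leq Kt$, so componentwise $w(y',v')_{i}\leq c'_{i}+Kt=y'_{i}-(1-K)t\leq y'_{i}$; hence $y'\geq w(y',v')$ and therefore $y'\geq\max\{w(y',v'),x\}$.

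The lower point $y''$ is constructed symmetrically as a downward shift of $c''$: set $y''=c''-s\mathbf{1}$ with $s=\max\{0,\max_{i}(c''_{i}-x_{i})\}$, so that $y''\leq x$. The same contraction estimate for $w(\cdot,v'')$ gives $w(y'',v'')_{i}\geq c''_{i}-Ks=y''_{i}+(1-K)s\geq y''_{i}$, whence $y''\leq\min\{w(y'',v''),x\}$. This verifies condition (ii), and Theorem \ref{Thm:Main} then yields the global stability of $Q$.

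I expect the only real obstacle to be the verification of condition (ii); condition (i) and the remaining hypotheses are immediate. The key point there is to exploit that the contraction is taken with respect to the maximum norm, which is exactly what makes a uniform perturbation $t\mathbf{1}$ of the fixed point remain controlled coordinate by coordinate and thus produce a genuine super-solution (and, symmetrically, a sub-solution). One detail to be careful about is the degenerate case $t=0$ or $s=0$ (when $x$ already lies below $c'$, respectively above $c''$, in every coordinate), but the displayed bounds hold with equality-or-better in that case as well, so no separate treatment is needed.
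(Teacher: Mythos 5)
Your proof is correct and reaches the result through the same skeleton as the paper --- reduce to Theorem \ref{Thm:Main}, obtain condition (i) from the Banach fixed-point theorem, and verify condition (ii) via the maximum-norm contraction --- but your construction for condition (ii), which is the substance of the argument, is genuinely different. The paper builds $y'$ by \emph{scaling a constant vector}: with $\overline{x}=\max_i x_i$ assumed positive, it sets $y'=t(\overline{x},\ldots,\overline{x})$, bounds $w_i(y',v')-w_i(x,v')\leq K\lVert y'-x\rVert_\infty$ relative to the point $x$ itself, and obtains the desired inequality $w_i(y',v')\leq y_i'$ only asymptotically, for $t$ sufficiently large; this also forces a separate case when $\overline{x}$ is not positive (dispatched with a brief remark), since scaling up a non-positive vector does not dominate $x$. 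You instead \emph{anchor at the fixed point} $c'$ of $w(\cdot,v')$ and take the uniform shift $y'=c'+t\mathbf{1}$ with the explicit finite $t=\max\{0,\max_i(x_i-c_i')\}$: because $w(c',v')=c'$, the contraction estimate immediately gives $w_i(y',v')\leq c_i'+Kt=y_i'-(1-K)t\leq y_i'$, with no limiting argument, no sign analysis, and the degenerate case $t=0$ covered by the same inequality. The trade-off is minor: the paper's construction of $y'$ and $y''$ never invokes the fixed points, only contraction-type estimates, so it cleanly separates what is used for condition (i) from what is used for condition (ii) (in the spirit of the paper's remark that the contraction hypothesis on $w(\cdot,v')$ can be dropped whenever condition (ii) is known by other means); your argument consumes the fixed points of both maps, which is harmless here since both exist by Banach, and in exchange yields a tighter, fully explicit, case-free verification.
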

 
\begin{proof}
 To use Theorem \ref{Thm:Main} we need to show that conditions (i) and (ii) hold. 

 From the the Banach fixed-point theorem (see Theorem 3.48 in \cite{aliprantis2006infinite}), condition (i) holds.

 Let $x \in S$ and let $\underline{x} = \min _{i} x_{i}$ and $\overline{x} = \max_{i} x_{i}$. Assume that $\overline{x} > 0$.
 Define $y ' = tz \in S$  where $t \geq 1$ is a scalar and $z_{i} = \overline{x}$ for $i=1,\ldots,n$.  Then $y' \geq x$ and $\lVert y' - x \rVert _{\infty} = t\overline{x} - \underline{x}$.  By the contraction property we have 
 $$ w_{i} (y',v') - w_{i}(x,v') \leq K (t\overline{x} - \underline{x})$$
 for $i=1,\ldots,n$ and some $K \in (0,1)$. We have
 $$ K (t\overline{x} - \underline{x}) +  w_{i}(x,v') \leq t\overline{x} \text{   \ if and only if  \ }   \frac{ w_{i}(x,v') - K\underline{x} } {t} \leq (1-K)\overline{x} $$
 which holds for some large $t$. Hence, $w_{i}(y',v') \leq y'_{i}$ for each $i$. The case that $\underline{x} \leq 0$ can be shown in a similar manner.

 An analogous argument shows that there exists $y'' \in S$ such that $y'' \leq x$ and $y'' \leq w(y'',v'') $.
Thus, condition (ii) of Theorem \ref{Thm:Main} holds. 
\end{proof}

A version of Corollary \ref{Cor:contraction} can be adjusted for other norms that are not the maximum norm. Also note that the condition that $w(x,v')$ is a contraction can be relaxed if condition (ii) of Theorem \ref{Thm:Main} holds.

Corollary \ref{Cor:contraction} can be useful in easily proving the global stability of important Markov processes such as autoregressive processes. We now provide a few examples. 

\begin{example}  \label{Examp:AR(1)} (Autoregressive process). 
 Consider the Markov chain on $S=\mathbb{R}^{n}$ which describes an autoregressive process 
\begin{equation*} 
X_{t+1} = w(X_{t},V_{t+1}) = A X_{t} + V_{t+1}
\end{equation*}
    where $V_{t+1}$ is a random vector on $E=\mathbb{R}^{n}$ with $\mathbb{E} | V_{i,t+1} | < \infty$ for $i=1,\ldots,n$ and $A \in \mathbb{R}^{n \times n}_{+}$. Then $w$ is an increasing function when $E$ is endowed with the product order on $\mathbb{R}^{n}$. Suppose that the operator norm $\rVert A \rVert _{\infty}$ is bounded by $0< K < 1$ where $\rVert A \rVert _{\infty} = \sup _{x \neq 0} \rVert Ax \rVert _{\infty} / \rVert x \rVert _{\infty} $. Then $w(\cdot , v)$ is a contraction with respect to the maximum norm for every $v$. Hence, we can apply Corollary \ref{Cor:contraction} to prove the stability of the Markov chain. Interestingly, one can provide examples of AR(1) processes where standard methods to prove stability such as irreducibility conditions, do not apply (see for example \cite{athreya1986mixing} and \cite{kamihigashi2012order}).
\end{example}

The AR(1) example above can be extended to a positive autoregressive process with random coefficients (RCA) as our next example shows. Note that in this example, $w(\cdot,v)$ is typically not a contraction for every $v$.  

\begin{example}  \label{Examp:RCA(1)} (Nonlinear positive autoregressive process with random coefficients). 
Consider the Markov chain on $S=\mathbb{R}_{+}$ 
\begin{equation} \label{Eq:RCA}
X_{t+1} = w(X_{t},V_{t+1}) := Y_{t+1} f(X_{t}) + Z_{t+1}
\end{equation}
with $V_{t+1}=(Y_{t+1},Z_{t+1})$ where $Z_{t+1}$ is a random variable on $\mathbb{R}_{+}$ with a finite mean, $Y_{t+1}$ is a random variable on  $ \mathbb{R}_{+}$, and $f:\mathbb{R}_{+} \rightarrow \mathbb{R}_{+}$ is an increasing function that is Lipschitz continuous with a constant $1$, i.e., $|f(x_{2}) - f(x_{1}) | \leq |x_{2} - x_{1}|$ for all $x_{1},x_{2} \in \mathbb{R}_{+}$. In particular, $f$ can be the identity function and then the chain describes a positive autoregressive process with a random coefficient  (see \cite{goldie2000stability} and \cite{erhardsson2014conditions}). Then $w$ is an increasing function when $E = \mathbb{R}_{+}^{2}$ is endowed with the product order on $\mathbb{R}^{2}$. We assume that the tightness condition holds (it holds if $\mathbb{E}(Y_{t+1}) < 1$). Hence, by choosing $v' = (y',z')$ and $v'' = (y'',z'')$ that are an ordered pair with $1>y' > y''$ so that $w$ is a contraction, we can apply Corollary \ref{Cor:contraction} to prove the stability of the Markov chain.  This example can be generalized to the multidimensional  case where $S = \mathbb{R}^{n}_{+}$. 
\end{example}

\begin{remark}
      The Markov chain induced by Equation (\ref{Eq:RCA}) plays an important role in economic dynamics. This equation models the wealth evolution of an agent who receives a random income $Z_{t}$ and experiences random returns on savings $R_{t}$. In each period, the agent has a wealth of $X_{t}$ and chooses to save $f(X_{t})$ according to some saving function $f$ that is  typically determined by optimizing the agent's expected discounted utility. Such dynamics of wealth are prevalent in various economic models and have been extensively studied, as surveyed in \cite{benhabib2018skewed}. The Markov chain induced by Equation (\ref{Eq:RCA}) also describes the evolution of firms' states in some dynamic oligopoly models \citep{light2022mean}. 
\end{remark}

We provide an additional example  of a portfolio allocation problem where one can easily prove the global stability of the Markov chain under consideration using Corollary \ref{Cor:contraction}. 

\begin{example} (Portfolio allocation problem). 
Consider the following allocation problem. In each period \( t \), an agent has wealth \( x_{t} \geq 0 \) to invest in two different assets. Asset 1 yields a random return \( R_{1,t+1} \) and asset 2 yields a random return \( R_{2,t+1} \). 
In addition, the random variable \( Z_{t+1} \) describes the withdrawal or addition to the agent's wealth in each period. The agent solves the portfolio optimization problem, and the solution is given by two non-negative functions \( g_{1}(x) \) and \( g_{2}(x) \), such that \( g_{1}(x) + g_{2}(x) \leq x \). Here, \( g_{i}(x) \) is a continuous function that describes how much the agent invests in asset \( i \) when the agent has a wealth of \( x \). We assume the standard condition that $g_{i}$ is increasing for $i=1,2$, i.e., the agent invests more in each asset when their wealth is higher. The Markov chain on $S=[0,\infty)$ that describes the agent's wealth evolution is given by 
$$X_{t+1} = w(X_{t},V_{t+1}) := \max \{(1+ R_{1,t+1}) g_{1}(X_{t}) + (1+R_{2,t+1}) g_{2}(X_{t}) + Z_{t+1} , 0\}$$
with $V_{t+1}=(R_{1,t+1},R_{2,t+1},Z_{t+1})$. Then $w$ is increasing. Now suppose that $(v',v'')$ is an ordered normal pair such that $v'=(r',r',z')$, $v'' = (r'',r'',z'')$ with $ 0 > r' > r'' $, $z'>z''$, $z'>0$. Then 
$$ w(x,v') = \max \{(1+r')g_{1}(x) + (1+r')g_{2}(x) + z',0\} = (1+r')x + z'$$
and $w(\cdot,v')$ is a contraction as $(1+r') 
\in (0,1)$. Similarly, $w(\cdot,v'')$ is a contraction. Thus, we can use Corollary \ref{Cor:contraction} to prove the stability of the chain, provided the tightness condition holds. In applications, the function $g$ is typically assumed to be bounded or to possess other properties that ensure the tightness condition is satisfied (see, for instance, \cite{ma2020income}).
\end{example}

The next corollary shows that natural concavity properties of $w$ can be leveraged to use Theorem \ref{Thm:Main} and prove the global  stability of monotone Markov chains. 

\begin{corollary} \label{Cor:concave}
   Assume that the sequence $\{\mu_{k}^{x}\}_{k=1}^{\infty}$ is tight for any $x \in S$. 
    
    Suppose that $(v',v'')$ is an ordered normal pair.
    Assume that  $S=\mathbb{R}_{+}^{n}$ and let $w=(w_{1},\ldots,w_{n})$. Suppose that $w(a,v'') > a$, $w(a,v') > a$  for some $a >0$ and $w(b,v'') < b $, $w(b,v') < b $ for some $b > a$. 
    
    Then if $w_{i}(\cdot,v'')$ and $w_{i}(\cdot,v')$  are strictly concave for each $i=1,\ldots,n$, $Q$ is globally stable. 
\end{corollary}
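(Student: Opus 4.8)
The plan is to verify conditions (i) and (ii) of Theorem \ref{Thm:Main} and then invoke the theorem directly. Condition (i) requires that $w(\cdot,v'')$ (or $w(\cdot,v')$) has a unique fixed point in $S = \mathbb{R}_{+}^{n}$. The strict concavity of each coordinate $w_{i}(\cdot,v'')$ together with the boundary behavior captured by the hypotheses $w(a,v'') > a$ and $w(b,v'') < b$ is the natural vehicle here: a strictly concave increasing map on $\mathbb{R}_{+}^{n}$ that starts above the diagonal (at $a$) and ends below it (at $b$) should cross exactly once. The first step is therefore to establish that $w(\cdot,v'')$ admits a \emph{unique} fixed point, and I would do the same for $w(\cdot,v')$ so that either choice works.

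For existence I would use that $w(a,v'') > a$ and $w(b,v'') < b$ place a fixed point inside the order interval $[a\mathbf{1}, b\mathbf{1}]$ (or more carefully $[a,b]$ componentwise), appealing to monotonicity of $w$ and a Brouwer/Tarski-type argument on the complete lattice $[a,b]$ — indeed by Proposition \ref{Prop: topkis}, greatest and least fixed points of the increasing map $w(\cdot,v'')$ exist on this order interval. For uniqueness, this is where strict concavity does the real work. The standard technique is to suppose two distinct fixed points $p$ and $q$ exist and derive a contradiction by exploiting concavity along the ray through the origin: if $w_i$ is strictly concave and $w_i(0) \geq 0$, then for $\lambda \in (0,1)$ one has $w_i(\lambda x) > \lambda w_i(x)$, i.e. the map is strictly subhomogeneous. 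This subhomogeneity, combined with monotonicity, forces fixed points to be comparable and then unique. Concretely, if $p \neq q$ were both fixed points, one considers the largest $\lambda \le 1$ with $\lambda p \le q$ (or $p \le q$ after reordering) and uses strict concavity to contradict $\lambda p$ touching $q$ from below.

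For condition (ii), I must show that for any $x \in \mathbb{R}_{+}^{n}$ there exist $y', y'' \in S$ with $y' \geq \max\{w(y',v'),x\}$ and $y'' \leq \min\{w(y'',v''),x\}$. The lower bound is easy: since $S = \mathbb{R}_{+}^{n}$ has least element $0$ and $w(a,v'') > a > 0$, I would take $y'' = 0$ if $x \geq 0$ forces $y'' \le x$, but more robustly I would use the point $a\mathbf{1}$ (componentwise $a$) for small enough scaling, relying on $w(a,v'') > a$ to guarantee $y'' \leq w(y'',v'')$ after choosing $y''$ sufficiently small and below $x$. The upper bound $y' \geq w(y',v')$ is where strict concavity again matters: subhomogeneity of the concave increasing map implies that for large scalars $t$, the point $t b\mathbf{1}$ satisfies $w(tb\mathbf{1},v') \leq t b\mathbf{1}$, because along the ray the map grows sublinearly once past the fixed point. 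I would pick $y' = t b \mathbf{1}$ with $t$ large enough that simultaneously $y' \geq x$; the inequality $w(b,v') < b$ anchors the sublinear growth so that scaling up keeps $w(y',v')$ below $y'$.

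The main obstacle I anticipate is the uniqueness argument in the multidimensional setting, since strict concavity of each coordinate $w_i$ does not by itself immediately yield a single fixed point of the vector map $w(\cdot,v'')$ — one must carefully combine coordinatewise concavity with the coupling induced by monotonicity across coordinates. The cleanest route is to prove strict subhomogeneity of the whole map $w(\cdot,v'')$ (each coordinate being strictly subhomogeneous in its argument), and then apply the classical result that a strictly subhomogeneous, monotone, concave self-map of a cone has at most one fixed point. Establishing that any two fixed points are order-comparable (so that the scaling argument can be applied) is the delicate point; once comparability is in hand, the strict inequality $w_i(\lambda x) > \lambda w_i(x)$ closes the argument. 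If comparability is troublesome, I would instead run the squeezing argument from the proof of Theorem \ref{Thm:Main} directly, noting that any fixed point must lie in $[a\mathbf{1}, b\mathbf{1}]$ and using the monotone iteration from both $a\mathbf{1}$ and $b\mathbf{1}$ to sandwich it, with strict concavity guaranteeing the two monotone limits coincide.
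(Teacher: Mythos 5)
Your proposal is correct and follows essentially the same route as the paper: verify condition (i) via the uniqueness of the fixed point of a strictly concave increasing self-map of $\mathbb{R}_{+}^{n}$ satisfying $w(a,\cdot)>a$, $w(b,\cdot)<b$ (the paper simply cites \cite{kennan2001uniqueness} for this, whose proof is exactly the subhomogeneity/scaling argument you sketch), and verify condition (ii) by taking $y''=0$ as the least element of $S$ and $y'$ a large scalar multiple of $b$, using concavity and $w(0,v')\geq 0$ to get $w(y',v')\leq y'$ --- your $y'=tb$ with $t$ large is the paper's $b=\lambda y'$ with $\lambda=1/t$.
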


\begin{proof}
 To use Theorem \ref{Thm:Main} we need to show that conditions (i) and (ii) hold. 
    
    Under the conditions of the Corollary, it is well known that condition (i) of Theorem \ref{Thm:Main} holds (see for example \cite{kennan2001uniqueness}). 

For condition (ii) of Theorem \ref{Thm:Main}, we can choose $y''$ to be the vector $0$ since it is the least element of $S$. In addition, for any $x \in S$,  we can take $\lambda \in [0,1]$ and $b = \lambda y'$ such that $y' \geq x$ and  
$$ b_{i} > w_{i} (b,v') = w_{i}( \lambda y' + (1-\lambda)0,v') \geq \lambda w_{i}(y',v') +(1-\lambda) w_{i}(0,v') \geq \lambda w_{i}(y',v')  $$
so $y_{i}' > w_{i}(y',v')$ for $i=1,\ldots,n$. Thus, $y' \geq \max \{ w(y',v') , x \}$ and condition (ii) of Theorem \ref{Thm:Main} holds. 
\end{proof}

As an example of an application of Corollary \ref{Cor:concave} that does not adhere to contraction properties, consider the following resource allocation dynamics: 

\begin{example} (Resource allocation dynamics). 
Consider an economy with \(k\) firms and \(n\)  resources, such as technological development, research and development, or broadly accessible knowledge. The state space, \(S = \mathbb{R}^n_{+}\), consists of vectors that represent the quantity of each resource \(i\) at period \(t\). Each firm $j$ follows a continuous policy function to produce a non-negative output \(f_{ij}(x_t)\) of resource \(i\) based on the current state \(x_t\). The evolution of each resource \(i\) for the next period, denoted by \(x'_i\), is given by the equation:
\[ x'_{i} = w_{i}(x, V) = \sum _{j=1}^{k}f_{ij}(x) + V_{i}, \]
where \(V_i\) is a non-negative random variable with finite expectations reflecting external influences such as breakthroughs occurring independently of the firms' efforts such as contributions from other sectors or academic research, and \(V = (V_1, \ldots, V_n)\). The function \(f_{ij}(x)\) is assumed to be increasing and strictly concave for each resource \(i\) and firm \(j\), reflecting persistency in output and diminishing returns  in each resource.  Under these assumptions,  $w$ is increasing and $w_{i}(x,v)$ is strictly concave in $x$ for each $v$ and $i=1,\ldots,n$. 

Under mild assumptions on $f_{ik}$, the conditions of Corollary \ref{Cor:concave} hold but $w$ is not necessarily a contraction. For instance, it is easy to verify that the conditions of the corollary hold for the case where $f_{ij} (x) = \sum _{l=1}^{n} c_{ijl}x_{l}^{d_{ijl}}$ with $c_{ijl}  \in (0,1)$, $d_{ijl} \in (0,1)$ for all $i,j,l$, and $w$ is not a contraction. In this case, it is easy to see that $w(a,v'') > a$, $w(a,v') > a$  for some $a >0$ and $w(b,v'') < b $, $w(b,v') < b $ for some $b > a$, and that the tightness condition holds. 
\end{example}

While Corollaries \ref{Cor:contraction} and \ref{Cor:concave} provide general sufficient conditions for applying Theorem \ref{Thm:Main} based on contraction and concavity properties, there are instances where Theorem \ref{Thm:Main} can be easily applied directly without these conditions by verifying directly that $w(x,v)$ has a unique fixed point for a certain $v$. As an example, consider a Markov chain defined on $\mathbb{R}$ by the transition function \( w(x,v) =f(x) + v \), where \( v \) is a random variable with support on \([a, b]\), $a<0, b>0$, and \( f \) is a continuous and  strictly increasing function that is bounded from below. To apply Theorem \ref{Thm:Main}, we can choose $v''=0$ and some $v' > 0$ and the key condition to apply Theorem \ref{Thm:Main} is to check if $w(x,0) = f(x)$ has a unique fixed point. This task is generally straightforward even if \( f \) does not exhibit concavity or contraction properties. For example, consider 
$$ f(x) = (\exp(x) + \delta) 1_{ \{x \leq c \} } + g _{c} (x)  1_{ \{x > c \} }   $$
for some $\delta > -1$ and a parameter $c>0$ where $g_{c}(x)$ is continuous, strictly concave, and strictly increasing with $g_{c}(c) = \exp(c) + \delta$. Suppose that for all $c>0$, there exists $b_{c}$ such that $g_{c}(b_{c}) \leq b_{c}-v'$ for some $b_{c}>c$. Note that $f$ is indeed continuous and strictly increasing. In addition, $f$ is bounded from below and $g_{c}(b_{c}) \leq b_{c}-v'$ guarantee that assumption (ii) of Theorem \ref{Thm:Main} holds. Assumption (i) holds as $g$ is continuous and strictly concave, $g_{c}(c) > c$ and $g_{c}(b_{c}) < b_{c}$ so it has a unique fixed point on $[c, \infty)$ and for $x \leq c$, $\exp(x) + \delta > x$, so $f$ has a unique fixed point. Hence, under the tightness condition of Theorem \ref{Thm:Main}, the Markov chain generated by $w$ is globally stable.

While this example is somewhat unconventional, it illustrates that even when the function \( w \) is neither concave nor a contraction, applying Theorem \ref{Thm:Main} can  be easier than applying other methods for establishing the global stability of the Markov chain generated by \( w \). The key advantage of using Theorem \ref{Thm:Main} lies in its reduction of the global stability property to the verification of a unique fixed point for \( w(x,v) \) for some $v$.

\bibliographystyle{ecta}
\bibliography{mon}

\end{document}